\newcommand\Z{\mathbb Z}
\newcommand\C{\mathbb C}
\newcommand\R{\mathbb R}
\newcommand\GT{{\mathbb{GT}}}
\newcommand\La{\Lambda}
\newcommand\wt{\widetilde}
\newcommand\wh{\widehat}
\renewcommand\H{\mathscr H}
\newcommand\X{\mathscr X}
\newtheorem{theorem}{Theorem}[section]
\newtheorem{proposition}[theorem] {Proposition}
\newtheorem{corollary}[theorem]{Corollary}
\newtheorem{lemma}[theorem]{ Lemma}
\theoremstyle{definition}
\newtheorem{remark}[theorem]{Remark}
\begin{document}

\title[]{Projections of orbital measures, Gelfand--Tsetlin polytopes, and splines}

\author{Grigori Olshanski}
\address{Institute for Information Transmission Problems, Moscow, Russia;
\newline\indent Independent University of Moscow, Russia;
\newline\indent National Research University Higher School of Economics, Moscow, Russia}

\email{olsh2007@gmail.com}

\date{}

\maketitle

\begin{abstract}
The unitary group $U(N)$ acts by conjugations on the space $\mathscr H(N)$ of
$N\times N$ Hermitian matrices, and every orbit of this action carries a unique
invariant probability measure called an orbital measure. Consider the
projection of the space $\mathscr H(N)$ onto the real line assigning to an
Hermitian matrix its $(1,1)$-entry. Under this projection, the density of the
pushforward of a generic orbital measure is a spline function with $N$ knots.
This fact was pointed out by Andrei Okounkov in 1996, and the goal of the paper
is to propose a multidimensional generalization. Namely, it turns out that if
instead of the $(1,1)$-entry we cut out the upper left matrix corner of
arbitrary size $K\times K$, where $K=2,\dots,N-1$, then the pushforward of a
generic orbital measure is still computable: its density is given by a $K\times
K$ determinant composed from one-dimensional splines. The result can also be
reformulated in terms of projections of the Gelfand--Tsetlin polytopes.
\end{abstract}

\maketitle

\tableofcontents

\section{Introduction}\label{sect1}

\noindent{\bf Orbital measures.} Let $\H(N)$ be the space of $N\times N$
Hermitian matrices. For $K=1,\dots,N-1$, we denote by $p^N_K:\H(N)\to\H(K)$ the
linear projection consisting in deleting from the matrix $H\in\H(N)$ its last
$N-K$ rows and columns. We call $p^N_K(H)$, the image of $H$ under this
projection, the $K\times K$ {\it corner\/} of $H$.

The unitary group $U(N)$ acts on $\H(N)$ by conjugations, and because $U(N)$ is
compact, each orbit of this action carries a unique invariant probability
measure, which we call the {\it orbital measure\/}.  Given an orbital measure
$\mu$ on $\H(N)$, denote by $p^N_K(\mu)$ its pushforward under projection
$p^N_K$. Our goal is to describe $p^N_K(\mu)$.

The orbits in $\H(N)$ (and hence the orbital measures) can be indexed by
$N$-tuples of weakly increasing real numbers $X=(x_1\le\dots\le x_N)$, the
matrix eigenvalues. Let $\X(N)\subset\R^N$ denote the set of all such $X$'s.
Given $X\in\X(N)$, we write $O_X$ and $\mu_X$ for the corresponding orbit and
orbital measure, respectively.

Since $p^N_K(\mu_X)$ is a $U(K)$-invariant probability measure on $\H(K)$, it
can be uniquely decomposed into a continual convex combination of orbital
measures, governed by a probability measure $\nu_{X,K}$ on the parameter space
$\X(K)$. That is, $\nu_{X,K}$ is characterized by the property that, for an
arbitrary Borel subset $S\subseteq\X(N)$,
$$
(p^N_K(\mu_X))(S)=\int_{Y\in\X(K)}\mu_Y(S)\nu_{X,K}(dY).
$$
The measure $\nu_{X,K}$ can be called the {\it radial part\/} of measure
$p^N_K(\mu_X)$.

\medskip

\noindent{\bf Main result.} Denote by $\X^0(N)$ the interior of $\X(N)$; that
is, $\X^0(N)$ consists of $N$-tuples  of {\it strictly\/} increasing real
numbers. If $X\in\X^0(N)$, then $\nu_{X,K}$ is absolutely continuous with
respect to Lebesgue measure on $\X(K)\subset\R^K$, and the main result, Theorem
\ref{thm3.A}, gives an explicit formula for the density of $\nu_{X,K}$.

In the case $K=1$ the target space of the projection is the real line, and the
density in question coincides with a {\it B-spline\/}, a certain piecewise
polynomial function on $\R$ (this fact was observed by Andrei Okounkov). In the
general case, it turns out that the density of $\nu_{X,K}$ is expressed through
a $K\times K$ determinant composed from some B-splines.

As the reader will see, the proof of Theorem \ref{thm3.A} is straightforward
and elementary. The main reason why I believe the result may be of interest is
the very appearance of splines, which are objects of classical and numerical
analysis, in a problem of representation-theoretic origin.

\medskip

\noindent{\bf Gelfand--Tsetlin polytopes.} Before explaining a connection with
representation theory I want to give a different interpretation of the measure
$\nu_{X,K}$.

For $X\in\X(N)$ and $Y\in\X(N-1)$, write $Y\prec X$ or $X\succ Y$ if the
coordinates of $X$ and $Y$ {\it interlace\/}, that is
$$
x_1\le y_1\le x_2\le\dots \le x_{N-1}\le y_{N-1}\le x_N.
$$
Given $X\in\X(N)$, the corresponding {\it Gelfand--Tsetlin polytope\/} $P_X$ is
the compact convex subset in the vector space
$$
\R^{N-1}\times\R^{N-2}\times\dots\times\R=\R^{N(N-1)/2},
$$
formed by triangular arrays subject to the interlacement constraints:
$$
P_X:=\{(Y^{(N-1)},\dots,Y^{(1)})\in R^{N(N-1)/2}: X\succ
Y^{(N-1)}\succ\dots\succ Y^{(1)}\}.
$$
Consider the map assigning to a matrix $H\in O_X$ the array formed by the
collections of eigenvalues of its corners $p^N_{N-1}(H), p^N_{N-2}(H),\dots,
p^N_1(H)$. It is well known (see Corollary \ref{cor3.A} below) that this map
projects the orbit $O_X$ onto the polytope $P_X$ and takes $\mu_X$ to the
uniform measure on $P_X$ (that is, the normalized Lebesgue measure). Next,
given $K=1,\dots,N-1$, consider the natural projection $P_X\to\X(K)$ extracting
from the array $(Y^{(N-1)},\dots,Y^{(1)})$ its $K$th component $Y^{(K)}$. The
measure $\nu_{X,K}$ is nothing else than the pushforward of the uniform measure
under the latter projection.

\medskip

\noindent {\bf Discrete version of the problem: relative dimension in
Gelfand--Tsetlin graph.} Let $\GT_N:=\X(N)\cap\Z^N$ be the set of weakly
increasing $N$-tuples of integers. The elements of $\GT_N$ are in bijection
with the irreducible representations of the group $U(N)$: with
$X=(x_1,\dots,x_N)\in\GT_N$ we associate the irreducible representation $T_X$
with signature (=highest weight) $\wh X:=(x_N,\dots,x_1)$. Here we pass from
$X$ to $\wt X$, because the coordinates of signatures are usually written in
the descending order, see Weyl \cite{Weyl}.

Let $X\in\GT_N$ and consider the finite set $P_X^\Z:=P_X\cap Z^{N(N-1)/2}$
consisting of integral points in the polytope $P_X$. Let us replace the uniform
measure on $P_X$ by the uniform measure on $P^\Z_X$ (that is, the normalized
counting measure). Next, given $K=1,\dots,N-1$, we consider again the same
projection $P_X\to \X(K)$ as before and denote by $\nu^\Z_{X,K}$ the
pushforward of the uniform measure on $P^\Z_X$. Evidently, $\nu^\Z_{X,K}$ is a
probability measure with finite support.

Elements of $P^\Z_X$ are the {\it Gelfand--Tsetlin schemes\/} (also called
Gelfand--Tsetlin patterns) with top row $X$; they parameterize the elements of
Gelfand--Tsetlin basis in $T_X$. By the very definition of $\nu^\Z_{X,K}$, for
$Y\in\GT_K$, the quantity $\nu^\Z_{X,K}(Y)$ (the mass assigning by
$\nu^\Z_{X,K}$ to $Y$) equals the fraction of the schemes with the $K$th row
equal to $Y$. This quantity is the same as the relative dimension of the
isotypic component of $T_Y$ in the restriction of $T_X$ to the subgroup
$U(K)\subset U(N)$.

The {\it Gelfand--Tsetlin graph\/} has the vertex set
$\GT_1\sqcup\GT_2\sqcup\dots$ and the edges formed by couples $Y\prec X$. In
the terminology of Borodin--Olshanski \cite{BO-Adv-2012}, $\nu^\Z_{X,K}(Y)$ is
the {\it relative dimension\/} of the vertex $Y\in\GT_K$ with respect to the
vertex $X\in\GT_N$. In \cite{BO-Adv-2012}, we derived a determinantal formula
for the relative dimension (see also Petrov \cite{Petrov} for a different
proof). That formula can be viewed as a discrete version of the formula of
Theorem \ref{thm3.A}.

I first guessed the formula of Theorem \ref{thm3.A} by degenerating the
``discrete'' formula of \cite{BO-Adv-2012}. However, this is not an optimal way
of derivation, because the discrete case is much more difficult than the
continuous one. I am grateful to Alexei Borodin for the suggestion to study the
degeneration of the ``discrete'' formula. Note that from the comparison of the
measures $\nu_{N,K}$ and $\nu^\Z_{X,K}$ it is seen that the former should be
related to the latter by a scaling limit transition.

\section{Preliminaries}\label{sect2}

The {\it fundamental spline\/} with $n\ge2$ knots $y_1<\dots<y_n$ can be
characterized as the only function $a\mapsto M(a;y_1\dots,y_n)$ on $\R$ of
class $C^{n-3}$, vanishing outside the interval $(y_1,y_n)$, equal to a
polynomial of degree $\le n-2$ on each interval $(y_i,y_{i+1})$, and normalized
by the condition
\begin{equation*}
\int_{-\infty}^{+\infty} M(a; y_1,\dots,y_n)da=1.
\end{equation*}
Here is an explicit expression:
\begin{equation}\label{eq2.C}
M(a;y_1,\dots,y_n):=(n-1)\sum_{i:\,
y_i>a}\frac{(y_i-a)^{n-2}}{\prod\limits_{r:\, r\ne i}(y_i-y_r)}.
\end{equation}
In particular, for $n=2$
\begin{equation*}
M(a;y_1,y_2)=\frac{\mathbf1_{y_1\le a\le y_2}}{y_2-y_1}.
\end{equation*}

\begin{remark}

The above definition is taken from Curry--Schoenberg \cite{CS}. In the
subsequent publications, Schoenberg changed the term to {\it B-spline\/}. The
latter term became commonly used. However, in the modern literature, it more
often refers to the function
\begin{equation}\label{eq2.H}
B(a;y_1,\dots,y_n):=(y_n-y_1)\sum_{i:\,
y_i>a}\frac{(y_i-a)^{n-2}}{\prod\limits_{r:\, r\ne i}(y_i-y_r)}\,,
\end{equation}
which differs from $M(a;y_1,\dots,y_n)$ by the numerical factor
$(y_n-y_1)/(n-1)$; see, e.g., de Boor \cite{DeBoor} or Phillips
\cite{Phillips}. The normalization in \eqref{eq2.H} has its own advantages, but
we will not use it. Note also that $M(a;y_1,\dots,y_n)$ is a special case of
{\it Peano kernel\/}, see Davis \cite{Davis}, Faraut \cite{Faraut}.

\end{remark}

We need two well-known formulas relating $M(a;y_1,\dots,y_n)$ to divided
differences (see, e.g., \cite{CS}, \cite{Faraut}).

Recall that the {\it divided difference\/} of a function $f(x)$ on points
$y_1,\dots,y_n$ is defined recursively by
$$
f[y_1,y_2]=\frac{f(y_2)-f(y_1)}{y_2-y_1}\,, \quad
f[y_1,y_2,y_3]=\frac{f[y_2,y_3)-f[y_1,y_2]}{y_3-y_1}\,,
$$
and so on; the final step is
\begin{equation}\label{eq2.A}
f[y_1,\dots,y_n]=\frac{f[y_2,\dots,y_n]-f[y_1,\dots,y_{n-1}]}{y_n-y_1}\,.
\end{equation}

Next, set
$$
x_+^s=\begin{cases} x^s, & x>0\\
0, & x\le0. \end{cases}
$$

In this notation, the first formula in question is
\begin{equation}\label{eq2.B}
M(a; y_1,\dots,y_n)=(n-1)f[y_1,\dots,y_n], \qquad \textrm{where
$f(x):=(x-a)^{n-2}_+$},
\end{equation}
and the second formula is
\begin{equation}\label{eq2.D}
f[y_1,\dots,y_n]=\frac1{(n-1)!}\int M(a;y_1,\dots,y_n)f^{(n-1)}(a)da.
\end{equation}

In \eqref{eq2.D}, $f$ is assumed being a function on $\R$ with piecewise
continuous derivative of order $n-1$. In particular, \eqref{eq2.D} is
applicable to $f(x)=(x-t)_+^{n-1}$, which is used in the lemma below.

To shorten the notation, let us abbreviate $Y:=(y_1<\dots<y_n)$.

\begin{lemma}\label{lemma2.A}
Fix $n=2,3,\dots$ and an $n$-tuple $Y=(y_1<\dots<y_n)\in\X^0(N)$. For an
arbitrary $b\in\R$ set
$$
f_b(x):=(x-b)^{n-1}_+, \qquad x\in\R.
$$
One has
\begin{gather}
\int_{-\infty}^{c}M(a;Y)da=1-f_{c}[Y], \quad c\in\R,\label{eq2.E}\\
\int_{b}^{c}M(a;Y)da=f_{b}[Y]-f_{c}[Y], \quad b<c,\label{eq2.F}\\
\int_{b}^{+\infty}M(a;Y)da=f_{b}[Y], \quad b\in\R.\label{eq2.G}
\end{gather}
\end{lemma}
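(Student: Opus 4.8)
The plan is to derive all three identities from formula \eqref{eq2.D} applied to the specific function $f = f_b$, together with the normalization and support properties of $M(\,\cdot\,;Y)$. First I would observe that $f_b(x) = (x-b)^{n-1}_+$ has piecewise continuous derivative of order $n-1$; indeed $f_b^{(n-1)}(a) = (n-1)!\,\mathbf 1_{a>b}$, so \eqref{eq2.D} reads
\begin{equation*}
f_b[Y] = \frac{1}{(n-1)!}\int M(a;Y)\,(n-1)!\,\mathbf 1_{a>b}\,da = \int_b^{+\infty} M(a;Y)\,da,
\end{equation*}
which is exactly \eqref{eq2.G}. This is really the only substantive computation; the other two identities follow formally.

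For \eqref{eq2.F}, I would simply subtract: by \eqref{eq2.G},
\begin{equation*}
\int_b^c M(a;Y)\,da = \int_b^{+\infty} M(a;Y)\,da - \int_c^{+\infty} M(a;Y)\,da = f_b[Y] - f_c[Y],
\end{equation*}
valid for $b<c$. For \eqref{eq2.E}, I would use the normalization $\int_{-\infty}^{+\infty} M(a;Y)\,da = 1$ from the definition of the fundamental spline, so that
\begin{equation*}
\int_{-\infty}^{c} M(a;Y)\,da = 1 - \int_c^{+\infty} M(a;Y)\,da = 1 - f_c[Y],
\end{equation*}
again by \eqref{eq2.G}.

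The only point requiring a little care — and the place I would expect a referee to want a word — is the applicability of \eqref{eq2.D} to $f_b$: the derivative $f_b^{(n-1)}$ has a jump at $x=b$, but it is piecewise continuous, which is exactly the hypothesis stated after \eqref{eq2.D}, and indeed the paragraph there explicitly flags $f(x)=(x-t)_+^{n-1}$ as the intended application. One should also note the edge behaviour: since $M(\,\cdot\,;Y)$ is supported on $(y_1,y_n)$, the formulas are consistent with the obvious values at $c \le y_1$, $c \ge y_n$, etc., and the divided difference $f_b[Y]$ depends only on the values $f_b(y_1),\dots,f_b(y_n)$, so it vanishes when $b \ge y_n$ and equals, after expansion, a quantity matching $1$ when $b \le y_1$ (consistent with $f_b(y_i)=(y_i-b)^{n-1}$ being a polynomial of degree $n-1 > n-2$, whose $n$-th divided difference... actually here the divided difference on $n$ points of a degree-$(n-1)$ polynomial is its leading coefficient, namely $1$). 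So no genuine obstacle arises; the lemma is a direct corollary of the Peano-kernel identity \eqref{eq2.D}.
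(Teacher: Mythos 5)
Your proof is correct and follows essentially the same route as the paper: both rest on applying the Peano kernel identity \eqref{eq2.D} to the truncated power and computing its $(n-1)$th derivative, then using the normalization of $M(\,\cdot\,;Y)$. The only (immaterial) difference is the order: you apply \eqref{eq2.D} to $f_b$ alone to get \eqref{eq2.G} first and deduce \eqref{eq2.F} by subtraction, whereas the paper applies it to $f_b-f_c$ to get \eqref{eq2.F} first and then lets $c\to+\infty$.
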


\begin{proof}
To check \eqref{eq2.F}, we apply \eqref{eq2.D} to $f(x)=f_b(x)-f_c(x)$, which
is justified, see the comment just after \eqref{eq2.D}. Then in the left-hand
side of \eqref{eq2.D} we get $f_b[Y]-f_c[Y]$. Next, observe that the $(n-1)$th
derivative of $f_t(x)$ equals $(n-1)!\mathbf1_{x\ge b}$, so that
$$
f^{(n-1)}(a)=(n-1)!(\mathbf1_{a\ge b}-\mathbf1_{a\ge c})=(n-1)!\mathbf1_{b\le
a<c}.
$$
Therefore, in the right-hand side we get $\int_b^c M(a;Y)da$, which proves
\eqref{eq2.F}.

Now \eqref{eq2.G} follows from \eqref{eq2.F} by setting $c=+\infty$, and
\eqref{eq2.E} follows from \eqref{eq2.G}, because the total integral of the
$M(a;Y)$ equals 1.
\end{proof}

\section{Projections of orbital measures}\label{sect3}.

We keep to the notation of Sections \ref{sect1} and \ref{sect2}

Given $X\in\X(N)$, the pushforward of the orbital measure $\mu_X$ under the map
$$
O_X\ni H\mapsto \textrm{the spectrum of $p^N_{N-1}(H)$}
$$
can be viewed as a probability measure on $\X(N-1)$ depending on $X$ as a
parameter; let us denote it by $\La^N_{N-1}(X,\,\cdot\,)$ or
$\La^N_{N-1}(X,dY)$. We regard $\La(X,dY)$ as a Markov kernel.

By classical Rayleigh's theorem, the eigenvalues of a matrix $H\in\X(N)$ and
its corner $p^N_{N-1}(H)$ interlace. Therefore, the measure
$\La^N_{N-1}(X,\,\cdot\,)$ is concentrated on the subset
\begin{equation}\label{eq3.A}
\{Y\in\X(N-1):Y\prec X\}\subset R^{N-1}.
\end{equation}

\begin{proposition}\label{prop3.A}
Assume $X=(x_1,\dots,x_N)\in\X^0(N)$. Then the measure
$\La^N_{N-1}(X,\,\cdot\,)$ is absolutely continuous with respect to Lebesgue
measure on the set \eqref{eq3.A}, and the density of
$\La^N_{N-1}(X,\,\cdot\,)$, denoted by $\La^N_{N-1}(X,Y)$, is given by
\begin{equation}\label{eq3.B}
\La^N_{N-1}(X,Y)=(N-1)!\frac{V(Y)}{V(X)}\mathbf1_{Y\prec X},
\end{equation}
where we use the notation
$$
V(X)=\prod_{j>i}(x_j-x_i)
$$
and the symbol\/ $\mathbf1_{Y\prec X}$ equals\/ $1$ or\/ $0$ depending on
whether $Y\prec X$ or not.
\end{proposition}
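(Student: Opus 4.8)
The plan is to compute the density of $\La^N_{N-1}(X,\cdot)$ directly via the Harish-Chandra/Itzykson--Zuber formula or, more elementarily, via the known structure of the corner process. First I would recall that for a random Hermitian matrix $H$ drawn from the orbital measure $\mu_X$ with $X\in\X^0(N)$, the joint distribution of the eigenvalues of $H$ together with those of $p^N_{N-1}(H)$ is supported on the interlacing set and has an explicitly known density. The cleanest route is to invoke the classical fact (going back to Gelfand--Naimark, Baryshnikov, Neretin, or Defosseux) that the conditional law of the $(N-1)$-spectrum given the $N$-spectrum is uniform with respect to the appropriate Lebesgue measure, weighted only by the Vandermonde ratio. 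Concretely, I would parametrize $O_X$ by the flag manifold $U(N)/T$ (with $T$ the maximal torus), push forward Haar measure, and use the Weyl integration formula together with the fact that the map $H\mapsto(\text{spectrum of }p^N_{N-1}(H))$ has fibers that are themselves orbits under a residual group action.

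The key computational step is an induction or a direct Fourier/Laplace transform argument. The characteristic function of $\La^N_{N-1}(X,\cdot)$ — equivalently, the Harish-Chandra integral $\int_{U(N)}e^{i\operatorname{tr}(\Theta\, u\,\mathrm{diag}(X)\,u^*)}du$ restricted to $\Theta$ supported on the top-left $(N-1)\times(N-1)$ block — is computable, but I expect it is simpler to verify \eqref{eq3.B} by checking that the claimed density integrates correctly. Namely, one checks that for the candidate density $(N-1)!\,\frac{V(Y)}{V(X)}\mathbf1_{Y\prec X}$, integrating out $Y$ over the interlacing region gives $1$: this reduces to the identity
\begin{equation*}
\int_{y_i\in[x_i,x_{i+1}]} V(y_1,\dots,y_{N-1})\,dy_1\cdots dy_{N-1}=\frac{V(X)}{(N-1)!},
\end{equation*}
which is a standard determinant evaluation (expand $V(Y)$ as a determinant of monomials, integrate each row, and recognize the result as $V(X)/(N-1)!$). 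Then I would identify the density itself, not just its normalization, by the eigenvalue-rigidity argument: the pushforward must be $U(N)$-equivariant in the right sense, the support is exactly the interlacing polytope by Rayleigh/Cauchy interlacing (already quoted in the excerpt), and the density must be a ratio of the form (something in $Y$)/$V(X)$ by homogeneity and the known Jacobian of the map from matrix entries to eigenvalues of the corner.

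The main obstacle is pinning down that the density is \emph{exactly} $V(Y)$ in the numerator (up to the constant), rather than merely proportional to it with some extra $X$-independent factor. I would resolve this by the minor-process/Gibbs argument: the full Gelfand--Tsetlin array obtained by iterating corners of $H\sim\mu_X$ is, by the quoted Corollary \ref{cor3.A}, uniform on the GT-polytope $P_X$; hence the one-step kernel $\La^N_{N-1}(X,Y)$ is the marginal of that uniform measure, whose density on the interlacing set is constant in the interior variables and therefore, after integrating out the lower rows, produces precisely $\operatorname{Vol}(P_Y)/\operatorname{Vol}(P_X)$. Since $\operatorname{Vol}(P_Y)=V(Y)\big/\prod_{k=1}^{N-1}k!$ (the GT-polytope volume formula, itself provable by the same Vandermonde integral iterated), and $\operatorname{Vol}(P_X)=V(X)\big/\prod_{k=1}^{N}k!$, the ratio collapses to $(N-1)!\,V(Y)/V(X)$, which is \eqref{eq3.B}. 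Alternatively, and perhaps more in the spirit of "straightforward and elementary," one can avoid Corollary \ref{cor3.A} (to prevent circularity) and instead compute the Jacobian of the Rayleigh parametrization $H\leftrightarrow(Y,\text{angular data})$ directly, where the factor $V(Y)/V(X)$ emerges as the ratio of Vandermonde-type Jacobians from diagonalizing the $(N-1)\times(N-1)$ corner and the full matrix; I would lean toward whichever of these the paper's Corollary \ref{cor3.A} makes cleanest.
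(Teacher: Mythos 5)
Your only completed computation is the normalization identity $\int_{Y\prec X}V(Y)\,dY=V(X)/(N-1)!$, and, as you yourself concede, normalization plus support does not identify the density: the appeal to ``equivariance and homogeneity'' cannot single out $V(Y)$, since any symmetric, suitably homogeneous positive function of $Y$, normalized on the interlacing polytope, would pass those tests. The two devices you offer to close this gap do not work as stated. The Gibbs/minor-process argument is circular in the context of this paper: Corollary \ref{cor3.A} (uniformity of the Gelfand--Tsetlin array on $P_X$) is deduced \emph{from} Proposition \ref{prop3.A} by iterating the one-step kernel, and you supply no independent proof of that uniformity. (Also, your volume formulas are off by one: $\operatorname{Vol}(P_X)=V(X)\big/\prod_{k=1}^{N-1}k!$, not $V(X)\big/\prod_{k=1}^{N}k!$, and as written your ratio would come out to $N!\,V(Y)/V(X)$ rather than the $(N-1)!\,V(Y)/V(X)$ you assert.) The fallback --- ``compute the Jacobian of the Rayleigh parametrization directly'' --- is precisely the content of the proposition (essentially the Gelfand--Naimark/Baryshnikov argument) and is not carried out; it is not a routine change of variables, because $O_X$ has Lebesgue measure zero in $\H(N)$, so one must genuinely analyze the fibration of the orbit by the map $H\mapsto(\text{spectrum of }p^N_{N-1}(H),\ \text{angular data})$ rather than quote a ``known Jacobian.''

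For comparison, the paper's proof runs exactly through the route you mention and then set aside: it takes the Laplace transform \eqref{eq3.N} of $\mu_X$, uses the Harish-Chandra--Itzykson--Zuber evaluation \eqref{eq3.K}, specializes $Z$ to have last row and column zero, and by elementary column subtractions rewrites $\det[e^{z_ix_j}]$ as an integral of $\det[e^{z_iy_j}]$ over the interlacing domain, arriving at $\wh\mu_X(Z)=\frac{(N-1)!}{V(X)}\int_{Y\prec X}V(Y)\,\wh\mu_Y(\wt Z)\,dY$, i.e.\ \eqref{eq3.L}, which is equivalent to \eqref{eq3.B}. To make your proposal complete you would have to either execute that transform computation, or prove the uniformity statement of Corollary \ref{cor3.A} by an argument independent of Proposition \ref{prop3.A} (e.g.\ a genuine co-area/Jacobian analysis of the orbit fibration); as it stands, the identification of the density --- the heart of the proposition --- is missing.
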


\begin{proof}
To the best of my knowledge, a published proof first appeared in Baryshnikov
\cite[Proposition 4.2]{Bar-PTRF-01}. However, the argument given in
\cite{Bar-PTRF-01} was known earlier: it is hidden in the first computation of
the spherical functions of the groups $SL(N,\C)$ due to Gelfand and Naimark,
see \cite[\S9]{GelfandNaimark-MIAN1950}. Note also that a more general result
can be found in Neretin \cite{Ner-MathSb2003}.

Here is a different proof. Consider the Laplace transform of the orbital
measure $\mu_X$:
\begin{equation}\label{eq3.N}
\wh \mu_X(Z):=\int e^{\operatorname{Tr}(ZH)}\mu_X(dH),
\end{equation}
where $Z$ is a complex $N\times N$ matrix. The integral in the right-hand side
is often called the {\it Harish-Chandra--Itzykson--Zuber integral\/}. Its value
is given by a well-known formula (see, e.g., Olshanski--Vershik \cite[Corollary
5.2]{OV-AMS96}):
\begin{equation}\label{eq3.K}
\wh \mu_X(Z)=
c_N\,\frac{\det[e^{z_ix_j}]_{i,j=1}^N}{\prod\limits_{j>i}(z_j-z_i)(x_j-x_i)}\,,
\end{equation}
where $z_1,\dots,z_N$ are the eigenvalues of $Z$ and
$$
\qquad c_N=(N-1)!(N-2)!\dots0!
$$
(note that the right-hand side of \eqref{eq3.K} does not depend on the
enumeration of the eigenvalues of $Z$).

The claim of the proposition is equivalent to the following equality: Assume
that the entries in the last row and column of $Z$ equal 0, so that $Z$ has the
form
\begin{equation}\label{eq3.M}
Z=\begin{bmatrix} \wt Z & 0\\ 0& 0 \end{bmatrix},
\end{equation}
where $\wt Z$ is a complex matrix of size $(N-1)\times(N-1)$; then
\begin{equation}\label{eq3.L}
\wh\mu_X(Z)=\frac{(N-1)!}{V(X)}\int_{Y\prec X}V(Y)\wh\mu_Y(\wt Z)dY.
\end{equation}

To prove \eqref{eq3.L}, consider the matrix $T:=[e^{z_ix_j}]$ in the right-hand
side of \eqref{eq3.K}. Since $Z$ has the form \eqref{eq3.M}, at least one of
the eigenvalues $z_1,\dots,z_N$ equals 0. It is convenient to slightly change
the enumeration and denote the eigenvalues as $z_0=0,z_1,\dots,z_{N-1}$. In
accordance to this we will assume that the row number $i$ of $T$ ranges over
$\{0,\dots,N-1\}$ while the column index $j$ ranges over $\{1,\dots,N\}$. Since
$z_0=0$, the 0th row of $T$ is $(1,\dots,1)$. Let us subtract the $(N-1)$th
column from the $N$th one, then subtract the $(N-2)$th column from the
$(N-1)$th one, etc. This gives $ \det T=\det \wt T$, where $\wt T$ stands for
the matrix of order $N-1$ with the entries
$$
\wt T_{i,j}=e^{z_i x_{j+1}}-e^{z_ix_j}=z_i\int_{x_i}^{x_j}e^{z_iy_j}dy_j,
\qquad i,j=1,\dots,N-1.
$$
It follows
$$
\det\wt T=z_1\dots z_N\,\int_{Y\prec X}dY\det[e^{z_iy_j}]_{i,j=1}^{N-1},
$$
so that
$$
\wh\mu_X(Z)=c_N\,\frac{z_1\dots z_N\int_{Y\prec
X}dY\det[e^{z_iy_j}]_{i,j=1}^{N-1}}{\prod\limits_{N-1\ge j>i\ge0}(z_j-z_i)\cdot
V(X)}\,.
$$
Next, because $z_0=0$, the product over $j>i$ in the denominator equals
$$
z_1\dots z_N\prod_{N-1\ge j>i\ge1}(z_j-z_i),
$$
so that the product $z_1\dots z_N$ is cancelled out. Taking into account the
fact that $\wh\mu_Y(\wt Z)$ is given by the determinantal formula similar to
\eqref{eq3.K} and using the obvious relation $c_N=(N-1)!c_{N-1}$ we finally get
the desired equality \eqref{eq3.L}.

\end{proof}

{}From Proposition \ref{prop3.A} it is easy to deduce the following corollary
(see also \cite[Proposition 4.7]{Bar-PTRF-01} and Defosseux \cite{Defosseux}).

\begin{corollary}\label{cor3.A}
Fix $X\in\X^0(N)$ and let $H$ range over $O_X$. The map assigning to $H$ the
collection of the eigenvalues of the corners $p^N_K(H)$, where
$K=N-1,N-2,\dots,1$, projects $O_X$ onto the Gelfand--Tsetlin polytope $P_X$
and takes the measure $\mu_X$ to the Lebesgue measure multiplied by the
constant
$$
\frac{(N-1)!(N-2)!\dots0!}{V(X)}.
$$
In particular, the volume of $P_X$ in the natural coordinates is equal to the
inverse of the above quantity.
\end{corollary}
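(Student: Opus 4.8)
The plan is to derive the corollary from Proposition \ref{prop3.A} by iterating it down the tower $N > N-1 > \dots > 1$ and multiplying the successive densities. The key observation is that the Markov kernel $\La^N_{N-1}(X,\,\cdot\,)$ is precisely the conditional law of the spectrum of $p^N_{N-1}(H)$, and that the map $H \mapsto \bigl(\mathrm{spec}\,p^N_{N-1}(H), \dots, \mathrm{spec}\,p^N_1(H)\bigr)$ is the composition of these single-step spectral maps: knowing the spectrum of $p^N_K(H)$, the next corner $p^N_{K-1}(H) = p^K_{K-1}(p^N_K(H))$ has spectrum distributed, conditionally, according to $\La^K_{K-1}(\,\cdot\,,\,\cdot\,)$ applied to that spectrum. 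This factorization of the pushforward into a chain of Markov kernels is the structural heart of the argument; it rests on the fact that the orbital measure $\mu_X$, when conditioned on the spectrum $Y$ of its $(N-1)$-corner, induces on that corner exactly the orbital measure $\mu_Y$ on $\H(N-1)$ — which is the content of the decomposition of $p^N_{N-1}(\mu_X)$ used implicitly in Proposition \ref{prop3.A}.

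First I would record that, by Proposition \ref{prop3.A} and the Markov property just described, the pushforward of $\mu_X$ under the full corner-spectra map has, on the Gelfand--Tsetlin polytope $P_X$ in the coordinates $(Y^{(N-1)}, \dots, Y^{(1)})$, the density
\begin{equation*}
\prod_{K=1}^{N-1} \La^{K+1}_{K}\bigl(Y^{(K+1)}, Y^{(K)}\bigr)
= \prod_{K=1}^{N-1} K!\,\frac{V(Y^{(K)})}{V(Y^{(K+1)})}\,\mathbf1_{Y^{(K)}\prec Y^{(K+1)}},
\end{equation*}
where we set $Y^{(N)}:=X$. Next I would observe that the product telescopes: the numerator $V(Y^{(K)})$ of the $K$-th factor cancels the denominator $V(Y^{(K+1)})$ appearing in the $(K+1)$-th factor (reading the index shift carefully), leaving only $V(Y^{(1)})$ in the top numerator, which is empty hence equal to $1$, and $V(Y^{(N)}) = V(X)$ in the bottom denominator. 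The combinatorial prefactors multiply to $(N-1)!\,(N-2)!\cdots 1!\cdot 0! = c_N$, and the product of the indicator functions is exactly $\mathbf1_{X \succ Y^{(N-1)} \succ \dots \succ Y^{(1)}}$, i.e.\ the indicator of $P_X$. Hence the density on $P_X$ is the constant $c_N/V(X)$, as claimed. The statement about the volume then follows because the pushforward of a probability measure is a probability measure: integrating the constant density over $P_X$ must give $1$, so $\mathrm{vol}(P_X) = V(X)/c_N$.

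The step I expect to be the main (though modest) obstacle is the justification of the Markov-chain factorization — specifically, that conditioning $\mu_X$ on the $(N-1)$-corner having spectrum $Y$ yields the orbital measure $\mu_Y$ on that corner, so that the transition from level $K+1$ to level $K$ is governed by $\La^{K+1}_K$ independently of the higher levels. This is the disintegration of $p^N_{N-1}(\mu_X)$ into orbital measures, which is forced by the $U(N-1)$-invariance of $p^N_{N-1}(\mu_X)$ together with uniqueness of the orbital decomposition; one then applies the tower property of conditional expectations along the chain of corners. Once this is in place, everything else is the bookkeeping of the telescoping product, which is routine.
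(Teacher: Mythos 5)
Your argument is correct and is essentially the deduction the paper intends: it iterates Proposition \ref{prop3.A} via the same Markov-kernel factorization that the paper itself sets up in Step 2 of the proof of Theorem \ref{thm3.A} (the superposition $\La^N_{N-1}\La^{N-1}_{N-2}\cdots$), with the telescoping product of densities yielding the constant $c_N/V(X)$ on $P_X$ and the volume statement following from normalization. Your justification of the Markov property via $U(K)$-invariance of the pushforward and uniqueness of the orbital disintegration is exactly the point the paper leaves implicit when it says the corollary is ``easy to deduce.''
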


Recall that $\nu_{X,K}$ stands for the radial part of the $K\times K$ corner of
the random matrix $H\in O_X$, driven by the orbital measure $\mu_X$ (see
Section \ref{sect1}), and $M(a; y_1,\dots,y_n)$ denotes the fundamental spline
with $n$ knots $y_1,\dots,y_n$ (see \eqref{eq2.C} and \eqref{eq2.B}).

\begin{theorem}\label{thm3.A}
Fix $X=(x_1,\dots,x_N)\in\X^0(N)$. For any $K=1,\dots,N-1$, the measure
$\nu_{X,K}$ on $\X(K)$ is absolutely continuous with respect to Lebesgue
measure and has the density
\begin{equation}\label{eq3.C}
M(a_1,\dots,a_K;x_1,\dots,x_N):=c_{N,K}\,
\frac{V(A)\,\det\left[M(a_j;x_i,\dots,x_{N-K+i})\right]_{i,j=1}^K}
{\prod\limits_{(j,i):\,j-i\ge N-K+1}(x_j-x_i)}\,,
\end{equation}
where
$$
c_{N,K}=\prod_{i=1}^{K-1}\binom{N-K+i}{i}.
$$
\end{theorem}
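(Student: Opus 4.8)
The plan is to induct on the number of steps $N-K$ in the chain of corner projections, using Proposition \ref{prop3.A} as the single-step building block. For $K=N-1$ the claimed density \eqref{eq3.C} must reduce to the Rayleigh density \eqref{eq3.B}: here each spline $M(a_j;x_i,x_{i+1})$ is the indicator $\mathbf1_{x_i\le a_j\le x_{i+1}}/(x_{i+1}-x_i)$, the denominator product is empty, $c_{N,N-1}=1$, and the $K\times K$ determinant of indicators evaluates (on the interlacing locus $A\prec X$) to $\mathbf1_{A\prec X}/\prod(x_{i+1}-x_i)$; combined with $V(A)$ this should match $(N-1)!\,V(A)/V(X)$ after recognizing $V(X)=\prod_{j>i}(x_j-x_i)$. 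So the base case is a direct check.

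For the inductive step I would use the semigroup property of the projections, $\nu_{X,K}(\,\cdot\,)=\int_{\X(N-1)}\nu_{Y,K}(\,\cdot\,)\La^N_{N-1}(X,dY)$, i.e. in densities
\begin{equation*}
M(a_1,\dots,a_K;x_1,\dots,x_N)=\int_{Y\prec X} M(a_1,\dots,a_K;y_1,\dots,y_{N-1})\,\La^N_{N-1}(X,Y)\,dY.
\end{equation*}
Substituting the inductive formula for the $(N-1,K)$ density and \eqref{eq3.B} for $\La^N_{N-1}(X,Y)$, the factors $V(Y)$ cancel against the $1/V(Y)$ hidden in the single-step density, leaving $(N-1)!/V(X)$ times $V(A)$ times an integral over the interlacement polytope $\{Y\prec X\}$ of a $K\times K$ determinant whose $(i,j)$ entry is $M(a_j;y_i,\dots,y_{N-1-K+i})$, divided by the product $\prod_{j-i\ge N-K}(y_j-y_i)$. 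The key is to carry the $Y$-integration inside the determinant: write $\det[\cdots]=\sum_{\sigma}\operatorname{sgn}(\sigma)\prod_i M(a_{\sigma(i)};y_i,\dots,y_{N-1-K+i})$, and observe that the integrand, as a function of $Y$, factors as a product of ``consecutive-block'' pieces linked only through the interlacing inequalities $x_i\le y_i\le x_{i+1}$. The plan is to integrate out the $y_i$'s one at a time — or in the appropriate grouped order — using precisely Lemma \ref{lemma2.A}: each integration $\int M(a;y_i,\dots,y_{i+n-2})\,dy_i$ over an interval $[b,c]$ produces a difference of divided differences $f_b[\,\cdot\,]-f_c[\,\cdot\,]$, and the recursion \eqref{eq2.A} for divided differences is exactly what promotes an $n$-knot spline to an $(n+1)$-knot spline $M(a;x_i,\dots,x_{N-K+i})$ on the larger knot set. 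The constants $\binom{N-K+i}{i}$ accumulating into $c_{N,K}$ should appear here as the combinatorial factors from \eqref{eq2.B} relating $M$ to $(n-1)!$ times a divided difference.

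The main obstacle is organizing the multidimensional integral over the interlacement region so that it really collapses into an iterated chain of one-dimensional spline integrals without cross-terms — i.e. showing that after antisymmetrizing via the determinant, the $Y$-dependence in row $i$ interacts with the constraints $x_i\le y_i\le x_{i+1}$ and with the denominator $\prod_{j-i\ge N-K}(y_j-y_i)$ in a way that telescopes. Concretely, the denominators on the two sides differ by exactly the new factors $(x_j-x_i)$ with $j-i=N-K$ that must be generated, and one must match these against the Vandermonde-type cancellations coming from the divided-difference recursion; I expect this bookkeeping — tracking which $(j,i)$ pairs enter the denominator at each step and verifying the product over $j-i\ge N-K+1$ on the left against $j-i\ge N-K$ on the right — to be the delicate part. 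A clean way to handle it is to prove the stronger statement by a single induction on $N-K$ in which the determinant structure, the $V(A)$ prefactor, and the denominator product are all preserved simultaneously, so that the inductive hypothesis feeds back exactly; once the correct invariant form is identified, each step is a routine application of Lemma \ref{lemma2.A} inside the $\sum_\sigma$ expansion, and the constant is pinned down by comparing leading behaviour or by the $K=N-1$ base case together with the multiplicativity $c_{N,K}=c_{N,K+1}\cdot\binom{N-K}{?}$ that the recursion dictates.
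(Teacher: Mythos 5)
Your induction runs in the wrong direction, and this is not a matter of bookkeeping: as you have set it up, the key integral is not the one Lemma \ref{lemma2.A} computes. You peel off the top step, writing $\nu_{X,K}=\int_{Y\prec X}\nu_{Y,K}\,\La^N_{N-1}(X,dY)$ with $K$ fixed. Two things then go wrong. First, the claimed cancellation is false: $\La^N_{N-1}(X,Y)$ carries $V(Y)$ in the \emph{numerator} (not $1/V(Y)$), while the inductive density for $(N-1,K)$ has in its denominator only the partial product $\prod_{(j,i):\,j-i\ge N-K}(y_j-y_i)$; after substitution the integrand therefore retains the residual factor $\prod_{1\le j-i\le N-K-1}(y_j-y_i)$, which couples all the $y$'s. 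Second, and more seriously, the determinant does not factor over the integration variables: the $(i,j)$ entry $M(a_j;y_i,\dots,y_{N-1-K+i})$ involves a window of $N-K$ consecutive knots, so each $y_m$ appears in up to $N-K$ different rows, and the integral over $Y\prec X$ does not collapse into a chain of one-dimensional spline integrals. Moreover, the one-dimensional integrals you would need are over the \emph{knots} $y_i$, whereas Lemma \ref{lemma2.A} evaluates $\int_b^c M(a;Y)\,da$, an integral over the \emph{argument} $a$; it says nothing about $\int M(a;y_i,\dots)\,dy_i$, so the step ``each integration produces a difference of divided differences'' has no support. (There are also two slips in your base case: for $K=N-1$ one has $c_{N,N-1}=(N-1)!$, not $1$, and the denominator $\prod_{j-i\ge2}(x_j-x_i)$ is not empty; with the correct values the check does go through as you expect.)

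The paper's induction goes the other way: downward in $K$ with $X$ fixed, via $\nu_{X,K-1}=\nu_{X,K}\La^K_{K-1}$, and this choice is precisely what makes everything collapse. In the integral over $A\in\X(K)$, the prefactor $V(A)$ of the inductive density cancels exactly against the $1/V(A)$ in $\La^K_{K-1}(A,B)=(K-1)!\,V(B)\mathbf1_{B\prec A}/V(A)$; the region $B\prec A$ is the product region $b_{j-1}\le a_j\le b_j$ (with $b_0=-\infty$, $b_K=+\infty$); and each column of the determinant depends on the single variable $a_j$, so the integral of the determinant is the determinant of the entries $F(i,j)=\int_{b_{j-1}}^{b_j}M(a;x_i,\dots,x_{N-K+i})\,da$. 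These are exactly the integrals of Lemma \ref{lemma2.A}, and elementary column and row operations together with \eqref{eq2.A} and \eqref{eq2.B} turn $\det F$ into the $(K-1)\times(K-1)$ spline determinant with each knot window lengthened by one, with the constants matching $c_{N,K}=c_{N,K-1}(N-K+1)^{K-1}/(K-1)!$. If you insist on your recursion (over $N$ with $K$ fixed), you would have to evaluate $\int_{Y\prec X}\prod_{1\le j-i\le N-K-1}(y_j-y_i)\det\bigl[M(a_j;y_i,\dots,y_{N-1-K+i})\bigr]\,dY$ directly, for which the paper's toolkit provides nothing; switching to the downward-in-$K$ recursion turns your outline into essentially the paper's proof.
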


Note that for $K=1$ the right-hand side reduces to the fundamental spline with
knots $x_1,\dots,x_N$. Thus, in the case $K=1$ the theorem says that the
density of the measure $\nu_{N,1}$ on $\R$ coincides with the spline
$M(a;x_1,\dots,x_N)$. This simple but important claim is due to Andrei
Okounkov, see \cite[Proposition 8.2]{OV-AMS96}.

\begin{proof}
We argue by induction on $K$, starting with $K=N-1$ and ending at $K=1$.

{\it Step\/} 1. Examine the case $K=N-1$, which is the base of induction. We
have $\nu_{X,N-1}(dA)=\La^N_{N-1}(X,dA)$. By proposition \ref{prop3.A}, the
measure $\La^N_{N-1}(X,\,\cdot\,)$ on $\X(N-1)$ is absolutely continuous with
respect to Lebesgue measure and has density $\La^N_{N-1}(X,A)$ given by
\eqref{eq3.B}. Thus, we have to check that $\La^N_{N-1}(X,A)$ coincides with
the quantity $M(a_1,\dots,a_{N-1}; x_1,\dots,x_N)$ given by the right-hand side
of \eqref{eq3.C}, where we have to take $K=N-1$. That is, the desired equality
has the form
$$
(N-1)!\frac{V(A)}{V(X)}\mathbf1_{A\prec X}=c_{N,N-1}\,
\frac{V(A)\,\det\left[M(a_j; x_i,x_{i+1})\right]_{i,j=1}^{N-1}}
{\prod\limits_{(j,i):\,j-i\ge 2}(x_j-x_i)}\,.
$$

Since $c_{N,N-1}=(N-1)!$, the desired equality reduces to
$$
\det\left[M(a_j; x_i,x_{i+1})\right]_{i,j=1}^{N-1} =\frac{\mathbf1_{A\prec
X}}{(x_2-x_1)(x_3-x_2)\dots(x_N-x_{N-1})}\,.
$$

Observe that the $(i,j)$-entry in the determinant is the quantity
$$
M(a_j;x_i,x_{i+1})=\frac{\mathbf1_{x_i\le a_j\le x_{i+1}}}{x_{i+1}-x_i},
$$
which vanishes unless $a_j\in[x_i, x_{i+1}]$. Since $a_1\le\dots\le a_{N-1}$,
the determinant vanishes unless $A\prec X$. Furthermore, if $A\prec X$, then
the matrix under the sign of determinant is diagonal, so the determinant equals
the product of the diagonal entries, which equals
$$
\frac1{(x_2-x_1)(x_3-x_2)\dots(x_N-x_{N-1})},
$$
as required.

{\it Step 2\/}. Given $K=1,\dots,N-1$, we consider the superposition of Markov
kernels
$$
\La^N_K:=\La^N_{N-1}\La^{N-1}_{N-2}\dots \La^{K+1}_K.
$$
In more detail, the result is a Markov kernel on $\X(N)\times\X(K)$ given by
$$
\La^N_K(X,dA)=\int_{}\La^N_{N-1}(X,dY^{(N-1)})\La^{N-1}_{N-2}(Y^{(N-1)},dY^{(N-2)})
\dots\La^{K+1}_K(Y^{(K+1)},dA),
$$
where the integral is taken over variables $Y^{(N-1)},\dots,Y^{(K+1)}$.
Obviously, $\La^N_K(X,dA)=\nu_{X,K}(dA)$, which entails the recurrence relation
\begin{equation}\label{eq3.D}
\nu_{X,K-1}=\nu_{X,K}\La^K_{K-1}, \qquad K=N-1,N-2,\dots,2,
\end{equation}
where, by definition, $\nu_{X,K}\La^K_{K-1}$ is the measure on $\X(K-1)$ given
by
\begin{equation}\label{eq3.E}
(\nu_{X,K}\La^K_{K-1})(dB)=\int_{A\in\X(K)} \nu_{X,K}(dA) \La^K_{K-1}(A,dB).
\end{equation}

{\it Step\/} 3. Assume now that the claim of the theorem holds for some $K\ge
2$ and deduce from this that it also holds for $K-1$. To do this we employ
\eqref{eq3.D} and \eqref{eq3.E}.

First of all, \eqref{eq3.D} and \eqref{eq3.E} imply  that $\nu_{X,K-1}$ is
absolutely continuous with respect to Lebesgue measure on $\X(K-1)$ and has the
density
\begin{equation}\label{eq3.F}
(\nu_{X,K}\La^K_{K-1})(B)=\int_{A\in\X(K)} \nu_{X,K}(dA) \La^K_{K-1}(A,B),
\qquad B\in\X(K-1).
\end{equation}

Let us compute the integral explicitly. By the induction assumption,
$\nu_{X,K}$ is absolutely continuous and has density \eqref{eq3.C}. Therefore,
integral \eqref{eq3.F} can be written in the form
$$
\int_{A\in\X^0(K)} M(a_1,\dots,a_K; x_1,\dots,x_N)\La^K_{K-1}(A,B)da_1\dots
da_K.
$$
Write $B=(b_1,\dots,b_K)$. Substituting the explicit expression for
$\La^K_{K-1}(A,B)$ given by Proposition \ref{prop3.A} we rewrite this as
\begin{equation}\label{eq3.G}
(K-1)!V(B)\int_A \frac{M(a_1,\dots,a_K; x_1,\dots,x_N)}{V(A)}da_1\dots da_K,
\end{equation}
where the integration domain is
\begin{equation}\label{eq3.H}
-\infty<a_1\le b_1, \quad \dots, \quad b_i\le a_{i+1}\le b_{i+1}, \quad
\dots,\quad  b_{k-1}\le a_K<+\infty.
\end{equation}

Next, plug in into \eqref{eq3.G} the explicit expression for $M(a_1,\dots,a_K;
x_1,\dots,x_N)$ given by \eqref{eq3.C}. Then the factor $V(A)$ is cancelled out
and we get
\begin{equation}\label{eq3.I}
\frac{c_{N,K}(K-1)!V(B)}{\prod\limits_{(j,i):\,j-i\ge N-K+1}(x_j-x_i)}\int_A
\det\left[M(a_j; x_i,\dots,x_{N-K+i})\right]_{i,j=1}^K da_1\dots da_K
\end{equation}
with the same integration domain \eqref{eq3.H}.

Put aside the pre-integral factor in \eqref{eq3.I} and examine the integral
itself. It can be written as a $K\times K$ determinant,
\begin{equation*}
\det[F(i,j)]_{i,j=1}^K,
\end{equation*}
where
$$
F(i,j):=\int_{b_{j-1}}^{b_j}M(a; Y_i)da
$$
and
$$
Y_i:=(x_i,\dots,x_{N-K+i})
$$
with the understanding that $b_0=-\infty$ and $b_K=+\infty$.

We are going to prove that
\begin{multline}\label{eq3.J}
\det[F(i,j)]_{i,j=1}^K=(N-K+1)^{K-1}\prod_{i=1}^{K-1}(x_{N-K+i+1}-x_i) \\
\times\det[M(b_j;x_i,\dots,x_{N-K+i+1})]_{i,j=1}^{K-1}.
\end{multline}
This will justify the induction step, because
$$
c_{N,K}=c_{N,K-1}\cdot\frac{(N-K+1)^{K-1}}{(K-1)!}
$$
and
$$
\frac{\prod_{i=1}^{K-1}(x_{N-K+i+1}-x_i)}{\prod\limits_{(j,i):\,j-i\ge
N-K+1}(x_j-x_i)}=\frac1{\prod\limits_{(j,i):\,j-i\ge N-K+2}(x_j-x_i)}\,.
$$

{\it Step\/} 4. It remains to prove \eqref{eq3.J}. We evaluate the quantities
$F(i,j)$ using Lemma \ref{lemma2.A}, where we substitute $n=N-K+1$ and $Y=Y_i$.
Then we get that the matrix entries $F(i,j)$ are given by the following
formulas:

$\bullet$ The entries of the first column have the form
$$
F(i,1)=1-f_{b_1}[Y_i] \qquad \textrm{by \eqref{eq2.E}}.
$$

$\bullet$ The entries of the $j$th column, $2\le j\le K-1$, have the form
$$
F(i,j)=f_{b_{j-1}}[Y_i]-f_{b_j}[Y_i] \qquad \textrm{by \eqref{eq2.F}}.
$$

$\bullet$ The entries of the last column have the form
$$
F(i,K)=f_{b_K}[Y_i] \qquad \textrm{by \eqref{eq2.G}}.
$$

We have $\det F=\det G$, where the $K\times K$ matrix $G$ is defined by
$$
G(i,j):=F(i,j)+\dots+F(i,K).
$$
The entries of the matrix $G$ are
$$
G(i,1)=1, \qquad G(i,j)=f_{b_{j-1}}[Y_i], \quad 2\le j\le K.
$$

Next, we get $\det G=\det H$ with the $(K-1)\times(K-1)$ matrix $H$ defined by
$$
H(i,j):=F(i+1,j+1)-F(i,j), \qquad 1\le i,j\le K-1.
$$
Observe now that
$$
H(i,j)=f_{b_j}[Y_{i+1}]-f_{b_j}[Y_i],
$$
which can be rewritten as
\begin{gather}
H(i,j)=(x_{N-K+i+1}-x_i)\frac{f_{b_j}[x_{i+1},\dots
x_{N-K+i+1}]-f_{b_j}[x_i,\dots,x_{N-K+i}]}{x_{N-K+i+1}-x_i}\\
=(x_{N-K+i+1}-x_i)f_{b_j}[x_i,\dots,x_{N-K+i+1}] \quad \textrm{by \eqref{eq2.A}}\\
=\frac1{N-K+1}\,(x_{N-K+i+1}-x_i)M(b_j;x_i,\dots,x_{N-K+i+1}) \quad \textrm{by
\eqref{eq2.B}}.
\end{gather}
This shows that the determinant $\det H=\det[H(i,j)]_{i,j=1}^{K-1}$ equals the
right-hand side of \eqref{eq3.J}. Since $\det H=\det F$, this completes the
proof.

\end{proof}

\section{Acknowledgement}

I am grateful to Jacques Faraut for valuable comments. The work was partially
supported by a grant from Simons Foundation (Simons--IUM Fellowship) and the
project SFB 701 of Bielefeld University.

\end{document}